\newcolumntype{C}[1]{>{\centering\let\newline\\\arraybackslash\hspace{0pt}}m{#1}}
\newcommand{\N}{{\mathbb  N}}
\newcommand{\ok}{{\overline{k}}}
\newcommand{\uk}{{\underline{k}}}
\newcommand{\on}{{\overline{n}}}
\newcommand{\un}{{\underline{n}}}
\newcommand{\ol}{{\overline{l}}}
\newcommand{\ul}{{\underline{l}}}
\newcommand{\fdot}{\,\cdot\,}
\newcommand{\wt}{\widetilde}
\newcommand{\ci}[1]{_{ {}_{\scriptstyle #1}}}
\chardef\mathlig@atcode\count255
\def\actively#1#2{\begingroup\uccode`\~=`#2\relax\uppercase{\endgroup#1~}}
\def\mathlig@gobble{\afterassignment\mathlig@next@cmd\let\mathlig@next= }
\def\mathlig@delim{\mathlig@delim}
\def\mathlig@defcs#1{\expandafter\def\csname#1\endcsname}
\def\mathlig@let@cs#1#2{\expandafter\let\expandafter#1\csname#2\endcsname}
\def\mathlig@appendcs#1#2{\expandafter\edef\csname#1\endcsname{\csname#1\endcsname#2}}
\def\mathlig#1#2{\mathlig@checklig#1\mathlig@end\mathlig@defcs{mathlig@back@#1}{#2}\ignorespaces}
\def\mathlig@checklig#1#2\mathlig@end{%
	\expandafter\ifx\csname mathlig@forw@#1\endcsname\relax
	\expandafter\mathchardef\csname mathlig@back@#1\endcsname=\mathcode`#1%
	\mathcode`#1"8000\actively\def#1{\csname mathlig@look@#1\endcsname}%
	\mathlig@dolig#1\mathlig@delim
	\fi
	\mathlig@checksuffix#1#2\mathlig@end
}
\def\mathlig@checksuffix#1#2\mathlig@end{%
	\ifx\mathlig@delim#2\mathlig@delim\relax\else\mathlig@checksuffix@{#1}#2\mathlig@end\fi
}
\def\mathlig@checksuffix@#1#2#3\mathlig@end{%
	\expandafter\ifx\csname mathlig@forw@#1#2\endcsname\relax\mathlig@dosuffix{#1}{#2}\fi
	\mathlig@checksuffix{#1#2}#3\mathlig@end
}
\def\mathlig@dosuffix#1#2{%
	\mathlig@appendcs{mathlig@toks@#1}{#2}%
	\mathlig@dolig{#1}{#2}\mathlig@delim
}
\def\mathlig@dolig#1#2\mathlig@delim{%
	\mathlig@defcs{mathlig@look@#1#2}{%
		\mathlig@let@cs\mathlig@next{mathlig@forw@#1#2}\futurelet\mathlig@next@tok\mathlig@next}%
	\mathlig@defcs{mathlig@forw@#1#2}{%
		\mathlig@let@cs\mathlig@next{mathlig@back@#1#2}%
		\mathlig@let@cs\checker{mathlig@chck@#1#2}%
		\mathlig@let@cs\mathligtoks{mathlig@toks@#1#2}%
		\expandafter\ifx\expandafter\mathlig@delim\mathligtoks\mathlig@delim\relax\else
		\expandafter\checker\mathligtoks\mathlig@delim\fi
		\mathlig@next
	}%
	\mathlig@defcs{mathlig@toks@#1#2}{}%
	\mathlig@defcs{mathlig@chck@#1#2}##1##2\mathlig@delim{%
		\ifx\mathlig@next@tok##1%
		\mathlig@let@cs\mathlig@next@cmd{mathlig@look@#1#2##1}\let\mathlig@next\mathlig@gobble
		\fi
		\ifx\mathlig@delim##2\mathlig@delim\relax\else
		\csname mathlig@chck@#1#2\endcsname##2\mathlig@delim
		\fi
	}%
	%
	\ifx\mathlig@delim#2\mathlig@delim\else
	\mathlig@defcs{mathlig@back@#1#2}{\csname mathlig@back@#1\endcsname #2}%
	\fi
}%
\mathchardef\ordinarycolon\mathcode`\:
\def\vcentcolon{\mathrel{\mathop\ordinarycolon}}
\numberwithin{equation}{section}
\theoremstyle{plain}
\newtheorem{theo}{Theorem}[section]
\newtheorem{lem}[theo]{Lemma}
\theoremstyle{definition}
\theoremstyle{remark}
\newtheorem*{ex*}{Example}
\theoremstyle{remark}
\newtheorem*{exs*}{Examples}
\theoremstyle{remark}
\newtheorem*{rem*}{Remark}
\newtheorem{rem}[theo]{Remark}
\newtheorem*{rems*}{Remarks}
\newcolumntype{C}[1]{>{\centering\let\newline\\\arraybackslash\hspace{0pt}}m{#1}}
\title[$X_2$ Moment Representations]{Moment Representations of Type I $X_2$ Exceptional Laguerre Polynomials}
\author{Constanze~Liaw}
\address{CASPER and Department of Mathematics, Baylor University, One Bear Place \#97328,      
 Waco, TX  76798, USA}
\email{Constanze$\underline{\,\,\,}$Liaw@baylor.edu}
\urladdr{http://sites.baylor.edu/constanze$\underline{\,\,\,}$liaw/}
\thanks{This work was supported by a grant from the Simons Foundation (\#426258, Constanze Liaw).}
\author{Jessica~Stewart~Kelly}
\address{Department of Mathematics, Christopher Newport University, 1 Avenue of the Arts, Newport News, VA 23606}
\email{Jessica$\underline{\,\,\,}$Kelly@cnu.edu}
\author{John~Osborn}
\address{Department of Mathematics, Baylor University, One Bear Place \#97328,      
 Waco, TX  76798, USA}
\email{John$\underline{\,\,\,}$Osborn@baylor.edu}
\begin{document}

\date{\today}
\subjclass[2010]{33C45, 34B24, 42C05, 44A60.}
\keywords{exceptional orthogonal polynomials, moments.}

\begin{abstract}
	The $X_m$ exceptional orthogonal polynomials (XOP) form a complete set of eigenpolynomials to a differential equation.  Despite being complete, the XOP set does not contain polynomials of every degree. Thereby, the XOP escape the Bochner classification theorem.
	
	In literature two ways to obtain XOP have been presented. When $m=1$, Gram--Schmidt orthogonalization of a so-called ``flag" was used. For general $m$, the Darboux transform was applied.
	
	Here, we present a possible flag for the $X_m$ exceptional Laguerre polynomials. We can write more about this. We only want to make specific picks when we also derive determinantal representations. There is a large degree of freedom in doing so. Further, we derive determinantal representations of the $X_2$ exceptional Laguerre polynomials involving certain adjusted moments of the exceptional weights. We find a recursion formula for these adjusted moments. The particular canonical flag we pick keeps both the determinantal representation and the moment recursion manageable.
\end{abstract}

\maketitle
\section{Introduction}
Exceptional orthogonal polynomial (XOP) systems are a generalization of the classical orthogonal polynomial systems.  The classical polynomial systems of Laguerre, Jacobi, and Hermite were shown to be the only polynomial systems of their kind in Bochner's well-known classification theorem \cite{KMUG, KMUG1}.  In addition to satisfying the requirements to be a classical orthogonal polynomial system, XOP systems require that there be an associated finite set, $A \subset \mathbb N_0$, for which the associated second-order differential equation has no polynomial solution of degree $n$ for $n \in A$. An XOP is designated as $X_m$ where $m$ is the cardinality of $A$; $m$ is called the ``co-dimension'' of the XOP system. 

An intense study of these XOP systems began less than a decade ago and originated in quantum mechanics as an alternative to the Lie-algebraic approach to exactly solvable models \cite{Dutta-Roy, grandati11, grandati12, Quesne, Quesne2}. From a purely mathematical perspective, these XOP families are of interest for their relationship to classical orthogonal polynomials and associated properties, such as their spectral analysis, location of zeros, intertwining nature, and recursion formulas \cite{KMUG9, KMUG3, KMUG4, KMUG5, KMUG6, Zeros,  HoOdakeSasaki, HoSasakiZeros2012,  Midya-Roy, Odake-Sasaki1}.  A complete classification for the XOP families has also been given \cite{Classification} and up to a linear transformation of variable, the XOP families include: Types I, II, and III Laguerre; Types I and II Jacobi; and Hermite.  

The focus of this work will be on the determinantal representations for the Type I Laguerre XOP systems of co-dimension two, which we abbreviate as $X_2^I$-Laguerre.  Some results, such as Lemma \ref{lemma}, will be applicable for any co-dimension.  Determinantal representations for the classical orthogonal polynomials have been well-studied \cite{Chihara}. For the determinantal representation of the XOP, work has been restricted to the $X_1$-Laguerre and Jacobi cases.  In \cite{Liaw-Osborn}, two determinantal representations for the $X_1^I$ Laguerre XOP system were presented.  The first used the canonical form for the moments; the second simplified computation by using adjusted moments.  The formation of the exceptional operator can be approached via a Darboux transform applied on the classical operator. By focusing on the underlying structure induced by the Darboux transform, the earlier result of Liaw and Osborn \cite{Liaw-Osborn} is extended in this article.  In \cite{Kelly-Liaw-Osborn}, a universal criterion for the exceptional condition and generalized determinantal representation for all types of $X_1$-Laguerre and Jacobi XOP systems was shown via the adjusted moments previously introduced.  

As mentioned in \cite{Kelly-Liaw-Osborn}, extending the determinantal results to higher co-dimension, that is, the $m\ge 2$ case, becomes difficult.  The complications arise as increasing the co-dimension $m$ results in an increase in the number of choices one can make regarding the associated flag.  These choices become more apparent in Section \ref{flag}.  In addition, the use of the adjusted moments as seen in \cite{Liaw-Osborn} greatly simplifies this process.  

\subsection{Outline}
In Section \ref{s-prelims} we introduce the $X_m^I$-Laguerre differential system including the polynomials and the weight function. We define and briefly discuss an alteration to the exceptional moments, which we call the \textit{adjusted moments}. In Section \ref{flag}, we explain how the exceptional conditions are responsible for excluding polynomials of degrees $0, \hdots, m-1$ from being eigenfunctions. In a sense, the main idea behind obtaining a determinantal representation for the $X_2^I$-Laguerre polynomials is to merge the exceptional conditions with the Gram--Schmidt algorithm. Namely, the exceptional conditions inform us to which \emph{flag} we apply Gram--Schmidt. The determinantal representations are the topic of Section \ref{s-determinantal}. They provide a way to find a general $X_2^I$-Laguerre polynomial that does not explicitly go through the Darboux transform or classical orthogonal polynomials, but rather uses the exceptional conditions and the adjusted moments. Recursion formulas for computing the ``matrix" of adjusted moments are presented (Section \ref{s-moments}) along with expressions for the initial adjusted moments, that is, those needed to start the recursion (Section \ref{App:AppendixA}).

\section{Preliminaries}\label{s-prelims}
The primary focus of our study will be on the $X_m^I$-Laguerre polynomials.  We refer the reader to the literature for a more in-depth look at the Laguerre XOP families.  In particular, a spectral study of the $X_1$-Laguerre polynomials \cite{Atia-Littlejohn-Stewart}. The origination of the Type III Laguerre along with a comprehensive look at all three types of exceptional Laguerre OPS may be found in \cite{LLMS}.

\begin{rem}
	In order to distinguish between the classical Laguerre orthogonal polynomial system and the XOP system, we introduce the ``hat'' notation.  The ``hat'' will appear on expressions, equations, polynomials, etc. which are associated with the exceptional case.  For example, the classical Laguerre polynomial of degree $n$ with parameter $\alpha$ is denoted by $L_n^{\alpha}(x)$; while our $X_m^I$-Laguerre polynomial of degree $n$ with parameter $\alpha$ is denoted by $\widehat L_{m,n}^{I,\alpha}$.
\end{rem}

The second-order differential expression associate with the $X_m^I$-Laguerre XOP system is given by
\begin{align}\label{eq: xop-de}
 \widehat \ell_m^{I,\alpha}[y](x)&:=-xy''(x)+\left(x-\alpha-1+2x\left(\log L_m^{\alpha-1}(-x)\right)'\right)y'(x)\\ \nonumber
 & \quad \quad +\left(2\alpha\left(\log L_m^{\alpha-1}(-x)\right)'-m\right)y(x)\,. 
\end{align} We draw the reader's attention to the logarithmic derivative of the classical Laguerre polynomial, $L_m^{\alpha-1}(-x)$, which appears in the coefficients of the $y'$ and $y$ terms.  Unlike its classical counterpart, XOP systems have  expressions which involve non-polynomial coefficients.  These coefficients will provide the requirements for what is referred to in Section \ref{flag} as the ``exceptional condition.''

The $X_m^I$-Laguerre polynomials $\left\{\widehat L_{m,n}^{I,\alpha}\right\}_{n=m}^\infty$ satisfy the Sturm-Liouville eigenvalue equation given by
\begin{equation}\label{eq: de}
\widehat \ell_m^{I,\alpha}[y](x)=(n-m)y(x)\, \quad (0<x<\infty)\,.
\end{equation} The $X_m^I$-Laguerre XOP system includes polynomials of degree $n \geq m$.  

Solutions to the eigenvalue equation \eqref{eq: de} can be characterized in a variety of ways, see \cite{LLMS}.  In particular, there exists the following relationship between the classical and exceptional $X_m^I$-Laguerre polynomials:
\[\widehat L_{m,n}^{I, \alpha}(x)=L_m^\alpha (-x)L_{n-m}^{\alpha-1}(x)+L_m^{\alpha-1}(-x)L_{n-m-1}^\alpha (x) \quad (n\geq m).
\]

In addition, the exceptional $X_m^I$-Laguerre polynomials $\left\{\widehat L_{m,n}^{I,\alpha}\right\}_{n=m}^\infty$ satisfy an orthogonality condition on $(0,\infty)$ for $\alpha >0$ with respect to the weight function 
\begin{equation}\label{eq: weight}
\widehat W_m^{I,\alpha}=\frac{x^\alpha e^{-x}}{(L_m^{\alpha-1}(-x))^2} \quad (0<x<\infty).
\end{equation}
Remarkably, for any finite co-dimension $m$, the $X_m^I$-Laguerre polynomials are complete in $L^2((0,\infty); \widehat W_m^{I,\alpha})$, see e.g.~\cite{KMUG6}.

In order to simplify our calculations for the determinantal representations for the $X_2$-Laguerre XOP, we choose to use adjusted moments.  We define these adjusted moments as 
\begin{equation}\label{adjmoments}
\widetilde \mu_{i,j}=\int_0^\infty (x-r)^i(x-s)^j\widehat W_m^{I,\alpha}(x)\,dx\,,
\end{equation} where $r$ and $s$ are the two distinct roots of $L_2^{\alpha-1}(-x)$ and $\widehat W(x)$ is an abbreviation for the $X_2$-Laguerre polynomials.  We describe the importance of these roots $r$ and $s$ in Section \ref{flag} below.

\section{The Flag}\label{flag}
In this section, we characterize the subspace spanned by the first $n$ of the $X_m^I$-Laguerre polynomials as those which satisfy the following $m$ exceptional conditions:
\begin{equation}\label{eq: exceptional condition}
	\xi_jy'(\xi_j)+\alpha y(\xi_j)=0 \text{ for } j=1, 2, \ldots, m,\;
\end{equation}
and where $\left\{\xi_j\right\}_{j=1}^m$ denote the $m$ real roots of the classical Laguerre polynomial $L_m^{\alpha-1}(-x)$.  As the roots of $L_m^{\alpha-1}(-x)$ are simple, these $m$-exceptional conditions are not redundant.  Recall that the exceptional polynomials, belonging to the $X_m^I$-Laguerre XOP sequence, will be of consecutive degrees beginning with degree $m$.
  
To tie these exceptional conditions in with the exceptional polynomials, two definitions are necessary: define the span of the first $k+1$ polynomials of the $X_m^I$-Type I Laguerre exceptional orthogonal polynomial system to be
\[
\mathcal L_{m,k}:=\text{span}\left\{\widehat
L_{m,j}^{I,\alpha}: j=m, m+1, \ldots, m+k\right\}\,.
\]

Let $\mathcal P_{k}$ represent the set of all polynomials whose degree is at most $k$.  Then define  $$\mathcal M_{m,k}:=\left\{p \in \mathcal P_{m+k}: p\text{ satisfies } \eqref{eq: exceptional condition}\right\}.$$

\begin{lem}
	\label{lemma}
	The sets $ \mathcal L_{m,k} = \mathcal M_{m,k} $ for all $m\in  \mathbb N$ and $k\in  \mathbb N_0$.
\end{lem}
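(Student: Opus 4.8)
The plan is to prove the two inclusions $\mathcal L_{m,k}\subseteq\mathcal M_{m,k}$ and $\mathcal M_{m,k}\subseteq\mathcal L_{m,k}$ separately, and to handle the first by showing that every basis element $\widehat L_{m,j}^{I,\alpha}$ with $m\le j\le m+k$ lies in $\mathcal M_{m,k}$, then handle the second by a dimension count. For the forward inclusion, note that each $\widehat L_{m,j}^{I,\alpha}$ is a polynomial of degree $j\le m+k$, so it lies in $\mathcal P_{m+k}$; it remains to check that it satisfies the $m$ exceptional conditions \eqref{eq: exceptional condition}. This should follow from the differential equation \eqref{eq: de}: evaluating $\widehat\ell_m^{I,\alpha}[\widehat L_{m,j}^{I,\alpha}]=(j-m)\widehat L_{m,j}^{I,\alpha}$ at a root $\xi$ of $L_m^{\alpha-1}(-x)$ and examining the coefficient structure in \eqref{eq: xop-de}. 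The logarithmic-derivative terms $\big(\log L_m^{\alpha-1}(-x)\big)'=\frac{d}{dx}L_m^{\alpha-1}(-x)\big/L_m^{\alpha-1}(-x)$ have a simple pole at each $\xi_j$ (simple root), so multiplying \eqref{eq: de} through by $L_m^{\alpha-1}(-x)$ and taking $x\to\xi_j$ kills every term that carries a factor of $L_m^{\alpha-1}(-x)$, leaving precisely a multiple of $\xi_j y'(\xi_j)+\alpha y(\xi_j)$; since the right-hand side also vanishes there, we get \eqref{eq: exceptional condition}. This is the step I expect to require the most care: one must track which terms are genuinely regular at $\xi_j$ versus which vanish, and confirm the surviving combination is exactly $2x\,[\log L_m^{\alpha-1}(-x)]'$ contributing $xy'+\alpha y$ after clearing the pole.

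For the reverse inclusion I would argue by dimension. The space $\mathcal L_{m,k}$ has dimension $k+1$, since the $\widehat L_{m,j}^{I,\alpha}$ for $j=m,\dots,m+k$ have distinct degrees and hence are linearly independent. For $\mathcal M_{m,k}$, observe that $\dim\mathcal P_{m+k}=m+k+1$ and that the $m$ exceptional conditions \eqref{eq: exceptional condition} are $m$ linear functionals on $\mathcal P_{m+k}$; I would show these functionals are linearly independent — which the paper has already flagged, since the $\xi_j$ are simple and distinct, the conditions "are not redundant" — so that $\dim\mathcal M_{m,k}=m+k+1-m=k+1$. To make the non-redundancy rigorous one can, for instance, exhibit for each $j$ a polynomial in $\mathcal P_{m+k}$ satisfying all exceptional conditions except the $j$-th (e.g. a suitable multiple of $\prod_{i\ne j}(x-\xi_i)^2$ times a linear factor, adjusted so the $j$-th functional is nonzero while the others vanish because of the double roots), which shows the $m\times(m+k+1)$ coefficient matrix has full rank $m$. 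Combined with the forward inclusion $\mathcal L_{m,k}\subseteq\mathcal M_{m,k}$ and the equality of dimensions, this forces $\mathcal L_{m,k}=\mathcal M_{m,k}$.

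A small point to address along the way: for $k=0$ the space $\mathcal M_{m,0}=\{p\in\mathcal P_m:p\text{ satisfies }\eqref{eq: exceptional condition}\}$ should be one-dimensional and spanned by $\widehat L_{m,m}^{I,\alpha}$; this is the base case and is covered by the general dimension count, but it is worth checking that $\widehat L_{m,m}^{I,\alpha}$ is genuinely of degree $m$ (not lower), which follows from the explicit product formula $\widehat L_{m,n}^{I,\alpha}(x)=L_m^\alpha(-x)L_{n-m}^{\alpha-1}(x)+L_m^{\alpha-1}(-x)L_{n-m-1}^\alpha(x)$ given above, taking $n=m$. The same formula confirms $\deg\widehat L_{m,j}^{I,\alpha}=j$ in general, which is what underpins the linear independence used in the dimension count. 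The main obstacle remains the pole-cancellation computation in the forward inclusion; everything else is linear algebra.
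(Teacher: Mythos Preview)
Your overall strategy---forward inclusion plus dimension count---is exactly the paper's. The paper phrases the forward inclusion slightly more abstractly: since $\mathcal L_{m,k}$ is spanned by eigenpolynomials it is invariant under $\widehat\ell_m^{I,\alpha}$, so for any $y\in\mathcal L_{m,k}$ the image $\widehat\ell_m^{I,\alpha}[y]$ lies again in $\mathcal L_{m,k}$ and in particular is a polynomial; but the rational coefficients in \eqref{eq: xop-de} make this impossible unless \eqref{eq: exceptional condition} holds at each $\xi_j$. Your pole-clearing computation (multiply by $L_m^{\alpha-1}(-x)$ and let $x\to\xi_j$) is the explicit form of the very same observation, and it works as you describe: the surviving terms are $2\xi_j\,\big(\tfrac{d}{dx}L_m^{\alpha-1}(-x)\big)\big|_{x=\xi_j}\,y'(\xi_j)+2\alpha\,\big(\tfrac{d}{dx}L_m^{\alpha-1}(-x)\big)\big|_{x=\xi_j}\,y(\xi_j)$, and the common nonzero factor cancels.

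One small gap in your independence argument: the witness you propose, $\prod_{i\ne j}(x-\xi_i)^2$ times a linear factor, has degree $2m-1$, which exceeds $m+k$ whenever $k<m-1$, so it need not lie in $\mathcal P_{m+k}$. A cleaner fix is to test the functionals $\phi_j(p):=\xi_j p'(\xi_j)+\alpha p(\xi_j)$ on the monomials $p_l(x)=x^l$ for $l=0,\dots,m-1$ (these all lie in $\mathcal P_{m+k}$ since $m-1\le m+k$). One computes $\phi_j(p_l)=(l+\alpha)\,\xi_j^{\,l}$, and since $\alpha>0$ the scalars $l+\alpha$ are nonzero; hence the $m\times m$ matrix $(\phi_j(p_l))$ is a column scaling of the Vandermonde matrix in the distinct $\xi_j$ and is therefore nonsingular. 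The paper itself simply asserts $\dim\mathcal M_{m,k}=k+1$ without justifying the independence, so your instinct to supply this step is a genuine improvement---just replace the construction.
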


\begin{rem} Before proving Lemma \ref{lemma}, we note that the original dimension argument used in the proof to find the equality of these sets was first introduced in \cite{LLMS}, while the entire proof is analogous to the proofs for \cite[Proposition 5.3]{Recurrence}, \cite[Lemma 4.1]{Kelly-Liaw-Osborn}, and \cite[Lemma 2.1]{Liaw-Osborn}.  As an adjustment is required to account for the $m$ exceptional conditions, the proof is included.
\end{rem}

\begin{proof} 
Fix $m \in \mathbb N$.  We will show the inclusion of subspaces $\mathcal L_{m,k}\leq \mathcal M_{m,k}$ for all $k \in \N_0$.  Let $y \in \mathcal L_{m,k}$.  Since $\mathcal L_{m,k}$ is invariant under $\widehat \ell_m^{I,\alpha}$,  it follows that $\widehat \ell_m^{I, \alpha}[y]\in \mathcal L_{m,k}$.  In particular, this requires that $\widehat \ell_m^{I, \alpha}[y]$ is polynomial---this will occur if and only if \eqref{eq: exceptional condition} is satisfied.  Thus, $y \in \mathcal M_{m,k}$ and $\mathcal L_{m,k}\leq \mathcal M_{m,k}$

On the other hand, a dimension argument is used to show that $\mathcal M_{m,k}\leq \mathcal L_{m,k}$.  To find the dimension of $\mathcal M_{m,k}$, observe that $P_{m+k}$ has dimension $m+k+1$ and that there are $m$ exceptional conditions imposed on $P_{m+k}$ in order to form $\mathcal M_{m,k}$.  Therefore $\dim\mathcal M_{m,k}=k+1< \infty$.  Clearly, $\dim \mathcal L_{m,k}=k+1$ as it is spanned by $k+1$ linearly independent polynomials.  As $\mathcal L_{m,k}$ and $\mathcal M_{m,k}$ are both subspaces of $\mathcal P_{m+k}$ and $\mathcal M_{m,k} \leq \mathcal L_{m,k}$, the result follows.
\end{proof}

Lemma \ref{lemma} is applicable to the $X_m^I$-Laguerre orthogonal polynomial systems for all $m \in \mathbb N$. As discussed in \cite[Lemma 4.1]{Kelly-Liaw-Osborn}, the computational aspects related to exceptional orthogonal polynomial systems of higher order become difficult.  With this in mind, we now restrict ourselves for the remainder of the paper to $m=2$.  Let the two roots of $L_2^{\alpha-1}(-x)$ be denoted as $r$ and $s$.

With the exceptional roots $r=-(\alpha+1)-\sqrt{\alpha+1}$ and $s=-(\alpha+1)+\sqrt{\alpha+1}$, we define degree $k$ polynomials:

For $k=2$, \begin{align}\label{v_2}
	v_2(x)&=L_2^\alpha(-x)\\ \nonumber
	&=\frac{1}{2}(x-r)(x-s)+x-r-\sqrt{\alpha+1}\,,
	\end{align}
for $k=3$,  \begin{align}\label{v_3}
	v_3(x)=(x-r)^2(x-s+1)\,,
	\end{align}
and for $k\ge 4$,
\begin{align}\label{v_k}
v_k(x) := (x-r)^{\overline k}(x-s)^{\underline k}\,,
\end{align}
where we use the floor and ceiling functions $\lceil \fdot \rceil$ and $\lfloor\fdot\rfloor$ (respectively) in the abbreviated notation $\ok:=\lceil k/2 \rceil$ and $\uk:=\lfloor k/2 \rfloor$.

\begin{lem}\label{l-flag}
	The sequence of polynomials $\{v_2, v_3, v_4, \ldots\}$ forms a flag for $\widehat \ell_2^{I,\alpha}$. 
\end{lem}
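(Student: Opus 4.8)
The plan is to establish, for every $k\in\mathbb N_0$, the nested span identity
\[
\spa\{v_2,v_3,\dots,v_{k+2}\}=\mathcal L_{2,k}\,,
\]
which is the content of the assertion that $\{v_2,v_3,\dots\}$ is a flag for $\widehat\ell_2^{I,\alpha}$ (and is what makes Gram--Schmidt applied to this sequence reproduce the $X_2^I$-Laguerre polynomials up to normalization). By Lemma \ref{lemma} with $m=2$ we have $\mathcal L_{2,k}=\mathcal M_{2,k}$ with $\dim\mathcal M_{2,k}=k+1$, so it suffices to check: (i) $\deg v_j=j$ for every $j\ge 2$, so that $\{v_2,\dots,v_{k+2}\}$ is a linearly independent family of $k+1$ polynomials inside $\mathcal P_{k+2}$; and (ii) each $v_j$ satisfies the two exceptional conditions \eqref{eq: exceptional condition}, namely $r\,v_j'(r)+\alpha\,v_j(r)=0$ and $s\,v_j'(s)+\alpha\,v_j(s)=0$. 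Given (i) and (ii), $\spa\{v_2,\dots,v_{k+2}\}$ is a $(k+1)$-dimensional subspace of the $(k+1)$-dimensional space $\mathcal M_{2,k}$, hence equals it, and therefore equals $\mathcal L_{2,k}$.

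Step (i) is immediate from the definitions: \eqref{v_2} has leading term $\tfrac12x^2$, \eqref{v_3} has leading term $x^3$, and by \eqref{v_k}, for $k\ge 4$, $\deg v_k=\ok+\uk=\lceil k/2\rceil+\lfloor k/2\rfloor=k$.

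For step (ii), write $\Lambda_\xi[y]:=\xi y'(\xi)+\alpha y(\xi)$; we must show $\Lambda_r[v_j]=\Lambda_s[v_j]=0$ for all $j\ge 2$. The case $j\ge 4$ is free: by \eqref{v_k}, $r$ is a root of $v_j$ of multiplicity $\ok\ge 2$ and $s$ a root of multiplicity $\uk\ge 2$, so $v_j$ and $v_j'$ both vanish at $r$ and at $s$, forcing $\Lambda_r[v_j]=\Lambda_s[v_j]=0$. For $j=3$, \eqref{v_3} shows $v_3$ has a double root at $r$, so $\Lambda_r[v_3]=0$ at once; evaluating at $s$ and factoring out $s-r$ gives $\Lambda_s[v_3]=(s-r)\big(2s+(s-r)(s+\alpha)\big)$, and the parenthesis vanishes after substituting $s-r=2\sqrt{\alpha+1}$ and $s=\sqrt{\alpha+1}-(\alpha+1)$. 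For $j=2$ one can either compute directly from \eqref{v_2}, using $r+s=-2(\alpha+1)$ to get $v_2'(x)=x+\alpha+2$ and then inserting the explicit values of $r,s$, or, more conceptually, invoke the classical--exceptional identity recalled in Section \ref{s-prelims}: taking $n=m=2$ there (with $L_{-1}^\alpha\equiv 0$) yields $v_2=L_2^\alpha(-x)=\widehat L_{2,2}^{I,\alpha}\in\mathcal L_{2,0}=\mathcal M_{2,0}$, so $v_2$ already satisfies \eqref{eq: exceptional condition}.

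I do not expect a serious obstacle. Step (i) is bookkeeping, the bulk of step (ii) (the range $j\ge 4$) is automatic from the double roots deliberately built into \eqref{v_k}, and the only genuine computation is the single scalar identity $2s+(s-r)(s+\alpha)=0$ for $j=3$ (together with the analogous check for $j=2$, if one prefers the hands-on route over the identification $v_2=\widehat L_{2,2}^{I,\alpha}$). The only delicate point is ensuring that the two exponents in \eqref{v_k} are both at least $2$ exactly when $k\ge 4$ --- equivalently $\lfloor k/2\rfloor\ge 2$ --- which is precisely why $v_2$ and $v_3$ are given separate definitions and must be handled as individual base cases.
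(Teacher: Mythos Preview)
Your proof is correct and follows essentially the same route as the paper: reduce, via Lemma~\ref{lemma}, to checking that each $v_j$ satisfies the two exceptional conditions \eqref{eq. X2 exceptional conditions}, handle $j\ge 4$ by the built-in double roots, and verify $j=2,3$ by hand. Your write-up is in fact more explicit than the paper's --- you state the flag property as the nested span identity, check the degree bookkeeping, and actually carry out the $\Lambda_s[v_3]$ computation that the paper merely declares ``straightforward'' --- but there is no substantive difference in strategy.
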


\begin{rem}
	We recall that, informally, in the case of XOP systems, a \emph{flag} is a sequence of polynomials whose span is preserved under an exceptional operator.  
\end{rem}

\begin{proof} 
By definition of $v_k$, it is ensured that only polynomials of degree 0 and 1 will be excluded.  As a result of Lemma \ref{lemma}, it is enough to show that the $v_k$ satisfy the exceptional conditions outlined in \eqref{eq: exceptional condition}.  These exceptional conditions reduce in the $X_2^I$-Laguerre case to: 
\begin{align}\label{eq. X2 exceptional conditions}
	r y'(r)+\alpha y(r)&=0\\ \nonumber
	s y'(s)+\alpha y(s) &=0\,.
\end{align}

Here $v_2$, the first element of the flag, is a classical Laguerre polynomial.  We could use any polynomial that satisfies the two exceptional conditions of \ref{eq. X2 exceptional conditions} for $v_3$, however, \eqref{v_3} seems to be the simplest possible, in terms of its representation using the two exceptional roots. 

It is straightforward to check that $v_2$ and $v_3$ both satisfy the exceptional conditions.  For $k\geq 4$, we have that both $v_k(r)=0$ and $v_k(s)=0$; therefore, it is enough to show that $v_k'(r)=0$ and $v_k'(s)=0$.  This too follows easily as $\overline k, \underline k \geq 2$.
\end{proof}
\section{Determinantal Representations}\label{s-determinantal}
We use the adjusted moments defined in \eqref{adjmoments} to provide a determinantal representation formula for the $X_2^I$-Laguerre orthogonal polynomials. 

This is done by expressing these polynomials in terms of powers of the the terms $(x-r)$ and $(x-s)$, where $r$ and $s$ are the distinct roots of the generalized classical Laguerre polynomial $L_2^{\alpha-1}(-x)$. 
Fix $n\in \N$ and --- as Ansatz --- consider the expansion
\begin{align}\label{e-Ansatz}
\widehat L_{2,n}^{I,\alpha}(x) 
= 
\sum_{k=0}^n
a_{nk} (x-r)^\ok (x-s)^\uk,
\end{align}
where $\ok$ and $\uk$ are as indicated previously in Section \ref{flag}, that is, $\ok=\lceil k/2 \rceil$ and $\uk=\lfloor k/2 \rfloor$.

The main idea behind the determinantal representation is to determine the $n+1$ coefficients $a_{nk}$ for $k=0, \hdots, n$. This is accomplished through working with the linear system $Ma = b$ with  $(n+1)\times (n+1)$ matrix $M$, as well as
\begin{align}\label{d-b}
a:=\begin{pmatrix}
a_{n0}\\
a_{n1}\\
\vdots\\
a_{nn}
\end{pmatrix}
\qquad
\text{and}
\qquad
b:=\begin{pmatrix}
0\\
\vdots\\
0\\
K_n
\end{pmatrix},
\end{align}
where $K_n = \|\widehat L_n^{I,\alpha}\|^2$ depends on the normalization convention. For us it merely matters that $K_n$ is non-zero.

\begin{theo}\label{thm: Matrix}
	The $X_2^I$-Laguerre polynomials can be obtained through two representations
	\begin{align}\label{e-DetReprA}
	\widehat L_{2,n}^{I,\alpha}(x) &= (\det(M))^{-1} \sum_{k=0}^n
	\det(M_k) (x-r)^\ok (x-s)^\uk\text{ or, alternatively,}\\
	\widehat L_{2,n}^{I,\alpha}(x) &=\left|
	\begin{array}{c}
	\text{(First }n\text{ rows of }M\text{)}\\
	\scriptstyle1\quad x-r \quad (x-r)(x-s) \quad (x-r)^2(x-s)\quad (x-r)^2(x-s)^2\quad \hdots \quad (x-r)^\on (x-s)^\un
	\end{array}
	\right|,
	\label{e-DetReprB}
	\end{align}
	where the $(n+1)\times (n+1)$ matrix $M$ is given by
	\begin{align*}
	&M=\\
	&\begin{pmatrix}
	\alpha & r & r(r-s)& 0& \hdots &\hdots & 0\\
	\alpha & s+\alpha(s-r) & s(s-r)& \scriptstyle s (s-r)^2 & 0& \hdots & 0\\
	{\scriptscriptstyle \frac{1}{2}\widetilde\mu_{1, 1}+\widetilde\mu_{1, 0}
		-\beta
		\widetilde\mu_{0, 0}}
	&
	{\scriptscriptstyle\frac{1}{2}\widetilde\mu_{2, 1}+\widetilde\mu_{2, 0}
		-\beta
		\widetilde\mu_{1, 0}}&
	{\scriptscriptstyle\frac{1}{2}\widetilde\mu_{2, 2}+\widetilde\mu_{2, 1}
		-\beta
		\widetilde\mu_{1, 1}}&\hdots&\hdots&\hdots&
	{\scriptscriptstyle  \frac{1}{2}\widetilde\mu_{\on+1, \un+1}+\widetilde\mu_{\on+1, \un}
		-\beta
		\widetilde\mu_{\on, \un}}\\
	\widetilde\mu_{2,1}+\widetilde\mu_{2, 0}
	&
	\widetilde\mu_{3, 1}+\widetilde\mu_{3,0}
	&
	\widetilde\mu_{3, 2}+\widetilde\mu_{3, 1}
	&\hdots&\hdots&\hdots&\widetilde\mu_{\on+2, \un+1}+\widetilde\mu_{\on+2, \un}\\
	\widetilde\mu_{2, 2}&\widetilde\mu_{3, 2}&\widetilde\mu_{3, 3}&\hdots&\hdots&\hdots&\widetilde\mu_{\on+2, \un+2}\\
	\widetilde\mu_{3, 2}&\widetilde\mu_{4, 2}&\widetilde\mu_{4, 3}&\hdots&\hdots&\hdots&\widetilde\mu_{\on+3, \un+2}\\
	\widetilde\mu_{3, 3}&\widetilde\mu_{4, 3}&\widetilde\mu_{4, 4}&\hdots&\hdots&\hdots&\widetilde\mu_{\on+3, \un+3}\\
	\vdots&\vdots&\vdots&\vdots&\vdots&\vdots&\vdots\\
	\widetilde\mu_{\on, \un}&\widetilde\mu_{\on+1, \un}&\widetilde\mu_{\on+1, \un+1}&\hdots&\hdots&\hdots&\widetilde\mu_{2\on, 2\un}\\
	\end{pmatrix}
	\end{align*}
	with $\beta = \sqrt{\alpha+1}$ and matrix $M_k$ is obtained from $M$ by replacing the $(k+1)$st column by the vector $b$.
	
	(For the precise entries in rows 3 through $n+1$, please refer to equations \eqref{e-entry3} and \eqref{e-entry4} in the proof.)
\end{theo}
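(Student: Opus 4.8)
The plan is to produce the linear system $Ma=b$ by reading off $n+1$ independent linear conditions on the coefficients $a_{nk}$ of the Ansatz \eqref{e-Ansatz}. Set $\phi_k(x):=(x-r)^{\ok}(x-s)^{\uk}$; since these have pairwise distinct degrees $0,1,\dots,n$, they form a basis of $\mathcal P_n$, so \eqref{e-Ansatz} is legitimate and the $a_{nk}$ are uniquely determined by $\widehat L_{2,n}^{I,\alpha}$. The three sources of conditions are: (i) by Lemma \ref{lemma}, $\widehat L_{2,n}^{I,\alpha}\in\mathcal M_{2,n-2}$, so it satisfies the two exceptional conditions \eqref{eq. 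X2 exceptional conditions}; (ii) the orthogonality of the $X_2^I$-Laguerre system gives $\langle\widehat L_{2,n}^{I,\alpha},\widehat L_{2,j}^{I,\alpha}\rangle=0$ for $j=2,\dots,n-1$, which --- since $v_2,\dots,v_j$ span $\mathcal L_{2,j-2}=\mathcal M_{2,j-2}$ by a degree count together with Lemmas \ref{lemma} and \ref{l-flag} --- is equivalent to $\langle\widehat L_{2,n}^{I,\alpha},v_j\rangle=0$ for $j=2,\dots,n-1$; and (iii) because $v_n$ has degree $n$, $\langle\widehat L_{2,n}^{I,\alpha},v_n\rangle=:K_n\ne 0$.

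First I would substitute \eqref{e-Ansatz} into the exceptional conditions $r\,y'(r)+\alpha y(r)=0$ and $s\,y'(s)+\alpha y(s)=0$. Since $\phi_k$ vanishes to order $\ok$ at $r$ and to order $\uk$ at $s$, only $k\le 2$ contributes to the condition at $r$ and only $k\le 3$ to the condition at $s$; evaluating $\phi_k$ and $\phi_k'$ at those points for the relevant small $k$ produces exactly rows $1$ and $2$ of $M$, with right-hand side $0$. Next I would substitute \eqref{e-Ansatz} into $\langle\,\cdot\,,v_j\rangle$ for $j=2,\dots,n$. Expanding each $v_j$ in the $\phi$-basis --- using $v_2=\tfrac12\phi_2+\phi_1-\beta\phi_0$, $v_3=\phi_3+(x-r)^2$, and $v_j=\phi_j$ for $j\ge 4$ --- and the identity $\langle\phi_k,\phi_l\rangle=\widetilde\mu_{\ok+\ol,\uk+\ul}$, which is immediate from \eqref{adjmoments}, turns the $j$-th such equation into row $j+1$ of $M$, with right-hand side $0$ for $j\le n-1$ and $K_n$ for $j=n$. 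Recording the general row entries in closed form (equations \eqref{e-entry3} for the $v_2$-row and \eqref{e-entry4} for the $v_j$-rows with $j\ge 3$) identifies the system with $Ma=b$.

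It then remains to show $\det M\ne 0$. I would note that a solution of the homogeneous system $Mx=0$ produces a polynomial $p:=\sum_k x_k\phi_k\in\mathcal P_n$ satisfying both exceptional conditions --- hence $p\in\mathcal M_{2,n-2}=\mathcal L_{2,n-2}$ --- and orthogonal to $v_2,\dots,v_n$, hence to all of $\mathcal L_{2,n-2}$; so $\langle p,p\rangle=0$, $p=0$, and $x=0$. With $M$ invertible, Cramer's rule gives $a_{nk}=\det(M_k)/\det(M)$, and substituting into \eqref{e-Ansatz} yields \eqref{e-DetReprA}. For \eqref{e-DetReprB}, expand each $\det(M_k)$ along its $(k+1)$st column, which equals $b$ and has its only nonzero entry $K_n$ in row $n+1$: this shows $\sum_k\det(M_k)\,\phi_k(x)=K_n\det\widetilde M(x)$, where $\widetilde M(x)$ is $M$ with its last row replaced by $(\phi_0(x),\dots,\phi_n(x))$. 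Hence $\widehat L_{2,n}^{I,\alpha}(x)=(K_n/\det M)\det\widetilde M(x)$, which is \eqref{e-DetReprB} up to the overall constant $K_n/\det M$ --- immaterial because $\widehat L_{2,n}^{I,\alpha}$ is only defined up to a nonzero scalar.

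The main obstacle is bookkeeping, not conceptual: one must carefully track the ceiling/floor shifts $\ok,\uk$ through the products $\phi_k\phi_l$ and $\phi_kv_j$ so that the moment subscripts match the displayed matrix, and keep the low-degree special cases --- the columns $k=0,1,2,3$ in rows $1,2$, and the rows arising from $v_2$ and $v_3$ --- separate from the uniform rule governing rows $5$ through $n+1$. The only other delicate point, already flagged above, is that the cofactor computation yields \eqref{e-DetReprB} only up to the scalar $K_n/\det M$, so one should state explicitly why this does not affect the conclusion.
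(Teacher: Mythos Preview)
Your proposal is correct and follows essentially the same route as the paper: the Ansatz \eqref{e-Ansatz}, the two exceptional conditions for rows $1$--$2$, orthogonality against $v_2,\dots,v_n$ for rows $3$ through $n{+}1$, Cramer's rule for \eqref{e-DetReprA}, and cofactor expansion for \eqref{e-DetReprB}. Your version is in fact a bit more complete, since you supply an argument for $\det M\ne 0$ (the paper only remarks on this) and you make explicit that \eqref{e-DetReprB} holds only up to the nonzero scalar $K_n/\det M$.
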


\begin{rem*}
	Matrix $M$ is invertible due to the fact that the polynomials are determined precisely by the conditions that give rise to the matrix.
\end{rem*}

\begin{proof}
	First we focus on $\widehat L_{2,n}^{I,\alpha}(x) = (\det(M))^{-1} \sum_{k=0}^n
	\det(M_k) (x-r)^\ok (x-s)^\uk$. Since $Ma=b$ and by Cramer's rule we have
	\[
	a_{nk} = \frac{\det(M_k)}{\det(M)}.
	\]
	So, it suffices to show the entries of the matrix $M$.
	
	To that end we fill the first two rows of $M$ with conditions that arise from the two exceptional conditions. Namely, given the two roots $r$ and $s$ of $L_2^{\alpha-1}(-x)$, every polynomial $p$ that satisfies the eigenvalue equation for the exceptional differential expression necessarily satisfies the exceptional conditions
	\begin{align*}
	rp'(r)+\alpha p(r) &= 0,\\
	sp'(s)+\alpha p(s) &= 0
	\end{align*}
	(see Lemma \ref{lemma}).
	
	Differentiating the Ansatz in \eqref{e-Ansatz} we see
	\[
	\left(\widehat L_{2,n}^{I,\alpha}(x) \right)'
	= 
	\sum_{k=1}^n
	\ok a_{nk} (x-r)^{\ok-1} (x-s)^\uk
	+\sum_{k=2}^n \uk a_{nk} (x-r)^\ok (x-s)^{\uk-1}.
	\]
	
	And substituting this and \eqref{e-Ansatz} into the exceptional condition $rp'(r)+\alpha p(r) = 0$ yields
	\begin{align*}
	0&=
	r\left(\widehat L_{2,n}^{I,\alpha}(r) \right)'+\alpha \widehat L_{2,n}^{I,\alpha}(r)\\
	&=
	r a_{n1} + r a_{n2} (r-s)
	+
	\alpha a_{n0}\\
	&=
	\left[\alpha \qquad r \qquad r(r-s)\qquad 0\qquad \hdots \qquad 0\right]a.
	\end{align*}
	We verified the first row of the matrix $M$.
	
	For the second row, we proceed in analogy with the exceptional condition $sp'(s)+\alpha p(s) = 0$. Some more terms prevail:
	\begin{align*}
	0&=
	s\left(\widehat L_{2,n}^{I,\alpha}(s) \right)'+\alpha \widehat L_n^{I,\alpha}(s)\\
	&=
	s a_{n1} + s a_{n2} (s-r) + s a_{n3} (s-r)^2
	+
	\alpha a_{n0}
	+
	\alpha a_{n1} (s-r)\\
	&=
	\left[\alpha \qquad s+\alpha(s-r) \qquad s(s-r)\qquad s (s-r)^2 \qquad 0\qquad \hdots \qquad 0\right]a.
	\end{align*}
	We verified the second row of the matrix $M$.
	
	The remaining $n-1$ rows of $M$ (rows $3$ through $n+1$) are obtained from the orthogonality requirements
	\[
	\langle \widehat L_{2,n}^{I,\alpha}, v_l \rangle\ci{W^\alpha}
	= K_n \delta_{nl}
	\qquad
	\text{for }l=2,\hdots , n.
	\]
	Here we let $\delta_{nl}$ denote the Kronecker delta symbol. Since $v_l$ are standard only for $l\ge 4$ we begin by treating the cases $l=2$ and $l=3$ separately. To avoid confusion later on we draw attention to the fact the shift in $l$ versus which row of $M$ we are filling. That is, fixing some $l=2,\hdots , n$, we will determine the entries in the $(l+1)$st row of $M$.

	When $l=2$ we have $v_2(x)= 1/2 (x-r)(x-s)+ x-r-\sqrt{\alpha+1}$ and so with the Ansatz \eqref{e-Ansatz} we see
	\begin{align*}
	&\qquad\left\langle \widehat L_n^{I,\alpha}, v_2 \right\rangle\ci{W^\alpha}\\
	&=
	\int_0^\infty 
	\sum_{k=0}^n
	a_{nk} (x-r)^\ok (x-s)^\uk
	\left(\frac{1}{2} (x-r)(x-s)+ x-r -\sqrt{\alpha+1}\right)
	W^\alpha(x) dx\\
	&=
	\sum_{k=0}^n
	a_{nk}
	\int_0^\infty 
	\left[\frac{1}{2}(x-r)^{\ok+1} (x-s)^{\uk+1}
	+(x-r)^{\ok+1} (x-s)^{\uk}\right.
	\\
	&\qquad\qquad\qquad\quad\qquad\biggl. -\sqrt{\alpha+1}(x-r)^{\ok} (x-s)^{\uk}\biggr]
	W^\alpha(x) dx\\
	&=
	\sum_{k=0}^n
	a_{nk}
	\left[\frac{1}{2}\widetilde\mu_{\ok+1, \uk+1}+\widetilde\mu_{\ok+1, \uk}
	-\sqrt{\alpha+1}
	\widetilde\mu_{\ok, \uk}
	\right]
	\end{align*}
	by the definition of the adjusted moments in \eqref{adjmoments}. As before (in virtue of linear algebra applied the system $Ma=b$), the summands for the different values of $k$ occupy the different entries of the 3rd row of $M$:
	\begin{align}\label{e-entry3}
	M_{3,k+1} = \frac{1}{2}\widetilde\mu_{\ok+1, \uk+1}+\widetilde\mu_{\ok+1, \uk}
	-\sqrt{\alpha+1}
	\widetilde\mu_{\ok, \uk}
	\qquad\text{for }0\le k\le n.
	\end{align}
	Again, there is a shift between the value of $k$ and the column of $M$.
	For example, for $k=0$ we obtain the $(3,1)$ entry of $M$, $M_{3,1}$, to equal
	\[
	M_{3,1} 
	= \frac{1}{2}\widetilde\mu_{1, 1}+\widetilde\mu_{1, 0}
	-\sqrt{\alpha+1}
	\widetilde\mu_{0, 0}.
	\]
	For $k=1$ and $k=2$ we see
	\begin{align*}
	k=1: \qquad M_{3,2} &=
	\frac{1}{2}\widetilde\mu_{2, 1}+\widetilde\mu_{2, 0}
	-\sqrt{\alpha+1}
	\widetilde\mu_{1, 0},\\
	k=2: \qquad M_{3,3} &=
	\frac{1}{2}\widetilde\mu_{2, 2}+\widetilde\mu_{2, 1}
	-\sqrt{\alpha+1}
	\widetilde\mu_{1, 1},
	\end{align*}
	and so forth.
	We verified the third row of $M$.
	
	Let us focus on the fourth row of $M$. To do so we take $l=3$ and with our choice $v_3(x) = (x-r)^2(x-s)+(x-r)^2$ for the degree three flag element we see
	\begin{align*}
	&\qquad\left\langle \widehat L_{2,n}^{I,\alpha}, v_3 \right\rangle\ci{W^\alpha}\\
	&=
	\int_0^\infty 
	\sum_{k=0}^n
	a_{nk} (x-r)^\ok (x-s)^\uk
	\left[(x-r)^2(x-s)+(x-r)^2\right]
	W^\alpha(x) dx\\
	&=
	\sum_{k=0}^n
	a_{nk}
	\int_0^\infty 
	\left[(x-r)^{\ok+2} (x-s)^{\uk+1}
	+(x-r)^{\ok+2} (x-s)^{\uk}\right]
	W^\alpha(x) dx\\
	&=
	\sum_{k=0}^n
	a_{nk}
	\left[\widetilde\mu_{\ok+2, \uk+1}+\widetilde\mu_{\ok+2, \uk}
	\right].
	\end{align*}
	Again for $k=0, 1, 2$ we obtain the matrix entries
	\begin{align*}
	k=0: \qquad M_{4,1} &=\widetilde\mu_{\ok+2, \uk+1}+\widetilde\mu_{\ok+2, \uk}
	\,\,\stackrel{k=0}{=}\,\,
	\widetilde\mu_{2,1}+\widetilde\mu_{2, 0}
	,\\
	k=1: \qquad M_{4,2} &=\widetilde\mu_{\ok+2, \uk+1}+\widetilde\mu_{\ok+2, \uk}
	\,\,\stackrel{k=1}{=}\,\,
	\widetilde\mu_{3, 1}+\widetilde\mu_{3,0}
	,\text{ and}\\
	k=2: \qquad M_{4,3} &
	=\widetilde\mu_{\ok+2, \uk+1}+\widetilde\mu_{\ok+2, \uk}
	\,\,\stackrel{k=2}{=}\,\,
	\widetilde\mu_{3, 2}+\widetilde\mu_{3, 1}.
	\end{align*}
	This shows the entries in the fourth row of $M$.
	
	When the degree of the exceptional polynomial $n\ge 4$, the rows five through $n+1$ we are obtained from the standard flag elements $$v_l = (x-r)^\ol(x-s)^\ul\, ,$$
	for $l=4, \hdots, n$. And so for those values of $l$ we have
	\begin{align*}
	\qquad\left\langle \widehat L_{2,n}^{I,\alpha}, v_l \right\rangle\ci{W^\alpha}
	&=
	\int_0^\infty 
	\sum_{k=0}^n
	a_{nk} (x-r)^\ok (x-s)^\uk
	\left[(x-r)^\ol(x-s)^\ul\right]
	W^\alpha(x) dx\\
	&=
	\sum_{k=0}^n
	a_{nk}
	\int_0^\infty 
	(x-r)^{\ok+\ol} (x-s)^{\uk+\ul}
	W^\alpha(x) dx
	=
	\sum_{k=0}^n
	a_{nk}
	\widetilde\mu_{\ok+\ol, \uk+\ul}.
	\end{align*}
	This results in the $(l+1,k+1)$ matrix entry
	\begin{align}\label{e-entry4}
	M_{l+1,k+1} =\widetilde \mu_{\ok+\ol, \uk+\ul}
	\qquad\text{for }3\le l \le n, 0\le k\le n.
	\end{align}
	This concludes the proof for the entries of the matrix $M$, and thereby the proof of equation \eqref{e-DetReprA}.
	
	%
	
	To obtain the second representation, \eqref{e-DetReprB}, we notice that by the definition of the vector $b$ in \eqref{d-b} we have
	\[
	\left|
	\begin{array}{c}
	\text{(First }n\text{ rows of }M\text{)}\\
	0 \quad \hdots \quad 0 \quad (x-r)^\ok(x-s)^\uk\quad 0 \quad \hdots \quad 0
	\end{array}
	\right|
	=
	\frac{\det(M_k)}{K_n} \,(x-r)^\ok(x-s)^\uk
	\]
	for $k=0,1,\hdots, n$. In the last row of the matrix, the entry $(x-r)^\ok(x-s)^\uk$ is in the $(n+1, k)$ position. Formula \eqref{e-DetReprB} now follows simply by co-factor expansion of the determinant in \eqref{e-DetReprB} along the last row.
\end{proof}

\section{Moment Recursion Formulas}\label{s-moments}

As the matrix $M$ used in the determinantal representation of the polynomials 
$\widehat L_{2,n}^{I,\alpha}(x)$ is largely populated by various adjusted moments, we need to develop a suite of recursion-like formulas which will allow us to compute all entries in the two-dimensional array of these moments.

\begin{theo}
\label{t-Recursion}
The adjusted moments $\wt\mu_{i,j}= \int_0^\infty (x-r)^i (x-s)^j \, \widehat W^{I,\alpha} (x) \, dx$
\begin{itemize}

\item[(a)] satisfy the three-term recursion-like formula
\begin{align} \label{3-term}
  \widetilde{\mu}_{i+1,j} = \; \widetilde{\mu}_{i,j+1} + 2\sqrt{\alpha+1} \, 
  \widetilde{\mu}_{i,j} \hspace{5.6 cm}(i,j\in \N_0),
\end{align}

\hspace{-0.57 cm} as well as

\item[(b)] the four-term recursion-like formula
\begin{align} \label{4-term}
  \widetilde{\mu}_{i+1,j+1} = \; &[i+j-1+2\alpha+\sqrt{\alpha+1}] \, 
  \widetilde{\mu}_{i+1,j}
  \\ \notag + \; &[(1-i-j)(\alpha+1)+(3-3i-j-4\alpha)\sqrt{\alpha+1}] \, \widetilde{\mu}_{i,j} 
  \\ \notag + \; &[(2i-4)(\alpha+1)(\sqrt{\alpha+1}+1)] \, \widetilde{\mu}_{i-1,j}
  \hspace{2.7 cm}(i \in \N \mbox{ and } j\in \N_0).
\end{align}

\end{itemize}
\end{theo}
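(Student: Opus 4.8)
The plan is to obtain both recursions by combining two elementary algebraic identities for the linear factors $(x-r)$ and $(x-s)$ with a single integration by parts against the weight; part (a) uses only the algebra, while part (b) uses the integration by parts and then feeds (a) back in as a reduction step. This parallels the moment recursions derived for the $X_1$ cases in \cite{Liaw-Osborn, Kelly-Liaw-Osborn}, adapted to the two exceptional roots. Throughout, every adjusted moment $\widetilde\mu_{i,j}$ with $i,j\ge 0$ is finite because $\alpha>0$ makes $\widehat W^{I,\alpha}$ integrable near $x=0$ (where it behaves like $x^\alpha$) while $e^{-x}$ controls it at $\infty$.

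For part (a) I would use that $L_2^{\alpha-1}(-x)=\tfrac12(x-r)(x-s)$, so that with the explicit roots $r=-(\alpha+1)-\sqrt{\alpha+1}$ and $s=-(\alpha+1)+\sqrt{\alpha+1}$ we get $s-r=2\sqrt{\alpha+1}$, hence the pointwise identity $(x-r)=(x-s)+2\sqrt{\alpha+1}$. Multiplying both sides by $(x-r)^i(x-s)^j\,\widehat W^{I,\alpha}(x)$ and integrating over $(0,\infty)$ gives \eqref{3-term} immediately.

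For part (b), the engine is a Pearson-type identity. Writing $P(x):=L_2^{\alpha-1}(-x)=\tfrac12(x-r)(x-s)$ we have $P'/P=\frac{1}{x-r}+\frac{1}{x-s}$, and since $\widehat W^{I,\alpha}=x^\alpha e^{-x}P^{-2}$ (see \eqref{eq: weight}),
\[
\bigl(\log \widehat W^{I,\alpha}\bigr)'(x) = \frac{\alpha}{x} - 1 - \frac{2}{x-r} - \frac{2}{x-s}.
\]
The decisive choice is to differentiate the auxiliary function $F(x):=(x-r)^i(x-s)^{j+1}\,x\,\widehat W^{I,\alpha}(x)$: the extra factor $x$ converts the $\alpha/x$ term into $\alpha+1$ once absorbed, so that no non-moment integral survives. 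Since $\alpha>0$ forces $F$ to vanish at both endpoints, $\int_0^\infty F'(x)\,dx=0$; using
\[
\frac{F'}{F}(x) = \frac{i-2}{x-r} + \frac{j-1}{x-s} + \frac{\alpha+1}{x} - 1,
\]
multiplying out, and expanding each stray factor $x$ as $(x-r)+r$ or as $(x-s)+s$ (choosing whichever keeps indices where we want them), I obtain the intermediate relation
\[
\widetilde\mu_{i+1,j+1} = \bigl[i+j-1+2\alpha+\sqrt{\alpha+1}\bigr]\widetilde\mu_{i,j+1} + (i-2)\,r\,\widetilde\mu_{i-1,j+1} + (j-1)\,s\,\widetilde\mu_{i,j}.
\]

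To finish, I would apply \eqref{3-term} twice — with index pair $(i,j)$ to replace $\widetilde\mu_{i,j+1}$ by $\widetilde\mu_{i+1,j}-2\sqrt{\alpha+1}\,\widetilde\mu_{i,j}$, and with $(i-1,j)$ to replace $\widetilde\mu_{i-1,j+1}$ by $\widetilde\mu_{i,j}-2\sqrt{\alpha+1}\,\widetilde\mu_{i-1,j}$ — so that the off-column moment $\widetilde\mu_{i+1,j+1}$ is expressed entirely through column-$j$ moments. Substituting the explicit values of $r$ and $s$ and collecting the coefficients of $\widetilde\mu_{i+1,j}$, $\widetilde\mu_{i,j}$, and $\widetilde\mu_{i-1,j}$ then produces \eqref{4-term}. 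I expect the genuine obstacle to be purely organizational: pinning down the auxiliary function with exactly the power $x^{1}$ and exponents $(i,\,j+1)$ so that the reduction via \eqref{3-term} lands precisely on the three moments appearing in \eqref{4-term}, and then simplifying the resulting polynomials in $\sqrt{\alpha+1}$ correctly; the analytic steps (endpoint vanishing, convergence of all integrals) are routine consequences of $\alpha>0$.
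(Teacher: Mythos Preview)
Your proposal is correct and follows essentially the same route as the paper: part (a) is identical, and for part (b) both arguments exploit the Pearson-type identity for $\widehat W^{I,\alpha}$ (the paper phrases it as the symmetry equation $a_2W'+(a_2'-a_1)W=0$ attached to $\widehat\ell_2^{I,\alpha}$), integrate by parts against a monomial in $(x-r),(x-s)$, and then feed \eqref{3-term} back in twice to land on column-$j$ moments. Your auxiliary function $F=(x-r)^i(x-s)^{j+1}x\,\widehat W^{I,\alpha}$ is a mild streamlining---the extra factor $x$ and the exponent $j{+}1$ absorb in one stroke both the $\alpha/x$ term and the index shift $j\mapsto j{+}1$ that the paper performs separately after writing $a_1$ in partial fractions---but the underlying mechanism is the same.
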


Throughout the observations and proof below, we simplify notation by writing $W$ for $\widehat W_2^{I,\alpha}$.

The following general observations will be key to our proof of part (b).  To obtain this result, we make use of several facts.  First, for functions $f,g$ which are smooth on $[0,\infty)$ the moment functionals satisfy:
\[ \left\langle W', f \right\rangle = - \left\langle W, f' \right\rangle \quad
\text{and}\quad \left\langle g W, f \right\rangle = \left\langle W, fg \right\rangle, \]
where $\langle\fdot, \fdot\rangle$ denotes the inner product with respect to Lebesgue measure on $[0,\infty)$.

Next, for a linear operator of the form $$\ell[y] = a_2y'' + a_1y' + a_0y,$$ the related symmetry equation is given by $$a_2y' + (a'_2 - a_1)y = 0,$$ and it is solved by the weight function (with respect to which the eigenpolynomials are orthogonal) \cite{Littlejohn1984}.
That is,
\[ a_2W' + (a_2' - a_1)W=0. \]

And together with
\begin{align*}
\left\langle a_2W', \; (x-r)^i (x-s)^j \right\rangle
&= -\left\langle W, \; a_2'(x-r)^i (x-s)^j \right\rangle 
  - i\left\langle W, \; a_2(x-r)^{i-1} (x-s)^j \right\rangle \\
& \hspace{0.46 cm} - j\left\langle W, \; a_2(x-r)^i (x-s)^{j-1} \right\rangle,  
\end{align*}
where the goal is to convert $W'$ into $W$, we find for $k\in \N$:

\begin{align}
  \notag 0 &= \left\langle a_2W' + (a_2' - a_1)W, \; (x-r)^i (x-s)^j
  \right\rangle\\ \label{eq:innerprod}
  &= i\left\langle W, \; a_2(x-r)^{i-1} (x-s)^j \right\rangle + j\left\langle W, \; 
  a_2(x-r)^i (x-s)^{j-1} \right\rangle \\ \notag
  & \hspace{0.41 cm} + \left\langle W, \; a_1(x-r)^i (x-s)^j
  \right\rangle.
\end{align}

\begin{proof}
  The proof of part (a) will simply rely on the relation $r = s - 2\sqrt{\alpha+1}$, where $r$ and $s$ are the exceptional roots first mentioned in Section 3.  We obtain
\begin{align*}
  \widetilde{\mu}_{i+1,j} 
  &= \int_0^\infty (x-r)^{i+1} (x-s)^j \, W(x) \, dx \\
  &= \int_0^\infty (x-r)^{i} (x-s+2\sqrt{\alpha+1}) (x-s)^j \, W(x) \, dx \\
  &= \int_0^\infty (x-r)^i (x-s)^{j+1} \, W(x) \, dx + 2\sqrt{\alpha+1} \int_0^\infty (x-r)^i (x-s)^j \, W(x) \, dx \\
  &= \widetilde{\mu}_{i,j+1} + 2\sqrt{\alpha+1} \widetilde{\mu}_{i,j}.
\end{align*}

  To start the proof of part (b), we note that the differential expression \eqref{eq: xop-de} for the $X_2^I$-Laguerre XOP system is
\begin{align*} \widehat
\ell_2^{\text{I,}\alpha}[y](x)= -xy^{\prime\prime} + \left(x - \alpha -1 + 
2x\frac{(L_2^{\alpha-1}(-x))^{\prime}}{L_2^{\alpha-1}(-x)}\right) y^{\prime} +
\left(\frac{2\alpha(L_2^{\alpha-1}(-x))^{\prime}} {L_2^{\alpha-1}(-x)} - 2\right) y,
\end{align*}
where the classical Laguerre polynomial $L_2^{\alpha-1}(-x) = \frac{1}{2}x^2 + (\alpha+1)x + \frac{\alpha(\alpha+1)}{2}$, and $(L_2^{\alpha-1}(-x))^{\prime} = x + \alpha + 1$.

For this differential expression, the coefficient $a_2(x) = -x$ and, after a slight simplification,
\begin{align*}
  a_1(x) = x - \alpha - 1 + \frac{4x(x+\alpha+1)}{x^2 + 2(\alpha+1)x + \alpha(\alpha+1)}.
\end{align*}

In order to have adjusted moments appear from these calculations, we re-write both $a_1$ and $a_2$ so that $x$ only appears in powers of the linear factors $(x-r)$ and $(x-s)$.

Accordingly, $a_2(x)$ can be represented as $-(x-r)-r$, and we find that
\begin{align*}
  a_1(x) = (x-r) + A - \frac{B}{(x-r)} + \frac{C}{(x-r)(x-s)},
\end{align*}
where $A = -2\alpha + 2 - \sqrt{\alpha+1}, B = 4(\alpha+1),$ and $C = (\alpha+1)(1-\sqrt{\alpha+1}),$ and where we have arbitrarily given priority to the factor $(x-r)$ in these representations.

Substituting $a_1$ and $a_2$ into \eqref{eq:innerprod}, we get

\begin{align*}
  0 &= i\left\langle W, \; [-(x-r)-r](x-r)^{i-1} (x-s)^j \right\rangle + j\left\langle W, \; [-(x-r)-r](x-r)^i (x-s)^{j-1} \right\rangle \\ & \hspace{0.46 cm} + \left\langle W, \; \left[(x-r) + A - \frac{B}{(x-r)} + \frac{C}{(x-r)(x-s)}\right](x-r)^i (x-s)^j \right\rangle \\
  &= -i\widetilde{\mu}_{i,j} - ir\widetilde{\mu}_{i-1,j} 
     - j\widetilde{\mu}_{i+1,j-1} - jr\widetilde{\mu}_{i,j-1}
     + \widetilde{\mu}_{i+1,j} + A\widetilde{\mu}_{i,j}
     - B\widetilde{\mu}_{i-1,j} + C\widetilde{\mu}_{i-1,j-1}\,,
\end{align*}  where the coefficients $i$ and $j$ arise from the subscript of the moment $\mu_{i,j}$.  ($i$ does not represent the imaginary number $\sqrt{-1}$.)
Thus,
\begin{align*}
  \widetilde{\mu}_{i+1,j} = j\widetilde{\mu}_{i+1,j-1} 
   + (i-A)\widetilde{\mu}_{i,j} + (ir+B)\widetilde{\mu}_{i-1,j}
   + jr\widetilde{\mu}_{i,j-1} - C\widetilde{\mu}_{i-1,j-1}.
\end{align*}

After shifting the index $j \mapsto j+1$, we have
\begin{align}
  \label{eq:recur1}
  \widetilde{\mu}_{i+1,j+1} &= (j+1)\widetilde{\mu}_{i+1,j} 
   + (i-A)\widetilde{\mu}_{i,j+1} + (ir+B)\widetilde{\mu}_{i-1,j+1} \\
   \notag & \hspace{0.42 cm} + (j+1)r\widetilde{\mu}_{i,j} - C\widetilde{\mu}_{i-1,j}.
\end{align}

Using the identity \eqref{3-term} of part (a) for $\widetilde{\mu}_{i,j+1}$ and $\widetilde{\mu}_{i-1,j+1}$, \eqref{eq:recur1} may be reduced from five to three terms on the right-hand side.  After some simplifying, we have
\begin{align*}
  \widetilde{\mu}_{i+1,j+1} &= (i+j+1-A)\, \widetilde{\mu}_{i+1,j}
  + [(i-A)(-2\sqrt{\alpha+1}) + (ir+B) + (j+1)r] \,  \widetilde{\mu}_{i,j} \\ & \hspace{0.46 cm} + [(ir+B)(-2\sqrt{\alpha+1}) - C] \, \widetilde{\mu}_{i-1,j}.
\end{align*}

Finally, after substituting in the values for $A, B, C,$ and $r$, we arrive at the result of part (b).

\end{proof}

\begin{rem*}

By switching the priority from the linear factor $(x-r)$ to $(x-s)$, but otherwise exactly following the steps of the proof of part (b) of Theorem \ref{t-Recursion}, we obtain a second four-term recursion-like formula
\begin{align} \label{4-term_b}
  \widetilde{\mu}_{i+1,j+1} = \; &[i+j-1+2\alpha-\sqrt{\alpha+1}] \, 
  \widetilde{\mu}_{i,j+1}
  \\ \notag & \hspace{-0.07 cm} + \; [(1-i-j)(\alpha+1)+(-3+i+3j+4\alpha)\sqrt{\alpha+1}] \, \widetilde{\mu}_{i,j} 
  \\ \notag & \hspace{-0.07 cm} + \; [(-2j+4)(\alpha+1)(\sqrt{\alpha+1}-1)] \, \widetilde{\mu}_{i,j-1}
  \hspace{2.7 cm}(i \in N \mbox{ and } j\in \N_0).
\end{align}

\end{rem*}

We have little doubt that numerous recursion-like relationships exist for the adjusted moments.  However, we will see in the next section that the three formulas given above in \eqref{3-term}, \eqref{4-term}, and \eqref{4-term_b}, in conjunction with three initial values, $\widetilde{\mu}_{2,2}$, $\widetilde{\mu}_{1,2}$, and $\widetilde{\mu}_{2,1}$, will suffice to compute all of the adjusted moments.

\section{Initial moments} \label{App:AppendixA}

\begin{theo}
\label{t-Init_Mom}

The adjusted moments for the $X_2^I$-Laguerre XOP are completely determined by the recursion-like formulas of Theorem \ref{t-Recursion}, and the three initial values
\begin{align*}
  \widetilde{\mu}_{2,2} &= 4\Gamma(1+\alpha), \\
  \widetilde{\mu}_{1,2} &= 4e^{-r} (-r)^\alpha \Gamma(1+\alpha) \Gamma(-\alpha,-r)
  ,\text{ and}\\
  \widetilde{\mu}_{2,1} &= 4e^{-s} (-s)^\alpha \Gamma(1+\alpha) \Gamma(-\alpha,-s),
\end{align*}
where the Gamma function is given by $\Gamma(x):=\int_0^\infty t^{x-1} e^{-t} dt$, and the incomplete Gamma function by $\Gamma(x,a) := \int_a^\infty t^{x-1} e^{-t} dt$ for $a>0$.

\end{theo}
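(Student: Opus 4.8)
The theorem bundles two claims --- the three closed-form initial moments, and the assertion that these together with \eqref{3-term}, \eqref{4-term} and \eqref{4-term_b} pin down every $\widetilde\mu_{i,j}$ --- and I would prove them in turn. For the closed forms the starting point is the factorization $L_2^{\alpha-1}(-x)=\frac{1}{2}(x-r)(x-s)$, obtained by matching the leading coefficient $\frac{1}{2}$ with the two roots $r,s$ of Section~\ref{flag}. Inserting this into \eqref{eq: weight} gives $\widehat W_2^{I,\alpha}(x)=\frac{4x^\alpha e^{-x}}{(x-r)^2(x-s)^2}$, so $\widetilde\mu_{2,2}=4\int_0^\infty x^\alpha e^{-x}\,dx=4\Gamma(1+\alpha)$ at once, while
\[
\widetilde\mu_{1,2}=4\int_0^\infty\frac{x^\alpha e^{-x}}{x-r}\,dx,\qquad \widetilde\mu_{2,1}=4\int_0^\infty\frac{x^\alpha e^{-x}}{x-s}\,dx.
\]
Since $\alpha>0$ forces $-r=(\alpha+1)+\sqrt{\alpha+1}>0$ and $-s=(\alpha+1)-\sqrt{\alpha+1}>0$, both are instances of $\int_0^\infty x^\alpha e^{-x}/(x+a)\,dx$ with $a>0$, which I would evaluate via the identity
\[
\int_0^\infty\frac{x^\alpha e^{-x}}{x+a}\,dx=\Gamma(1+\alpha)\,e^{a}a^{\alpha}\,\Gamma(-\alpha,a)\qquad(a>0),
\]
proved by writing $\frac{1}{x+a}=\int_0^\infty e^{-t(x+a)}\,dt$, carrying out the $x$-integration, and substituting $u=a(1+t)$. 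Specializing $a=-r$ and $a=-s$ reproduces exactly the claimed formulas for $\widetilde\mu_{1,2}$ and $\widetilde\mu_{2,1}$.

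For the determinacy claim I would induct on the total degree $N=i+j$ of a moment. The engine is the observation that \eqref{3-term}, read with $i+j=N-1$, connects the two \emph{neighboring} level-$N$ moments $\widetilde\mu_{i+1,j}$ and $\widetilde\mu_{i,j+1}$ through the single level-$(N-1)$ moment $\widetilde\mu_{i,j}$; hence, once all moments of level $\le N-1$ and \emph{any one} moment of level $N$ are known, the whole of level $N$ is determined by sliding along this chain in both directions. It therefore suffices to seed the low levels and to manufacture one level-$N$ moment at each inductive step. For the seeding, \eqref{3-term} at $(i,j)=(1,1)$ gives $\widetilde\mu_{1,1}=(\widetilde\mu_{2,1}-\widetilde\mu_{1,2})/(2\sqrt{\alpha+1})$; then \eqref{4-term} and \eqref{4-term_b}, both at $(i,j)=(1,1)$, express the known $\widetilde\mu_{2,2}$ as a linear combination featuring $\widetilde\mu_{0,1}$ with coefficient $-2(\alpha+1)(\sqrt{\alpha+1}+1)$ and $\widetilde\mu_{1,0}$ with coefficient $2(\alpha+1)(\sqrt{\alpha+1}-1)$ respectively --- both nonzero because $\alpha>0$ --- so one solves for $\widetilde\mu_{0,1}$ and $\widetilde\mu_{1,0}$; and \eqref{3-term} at $(0,0)$ then produces $\widetilde\mu_{0,0}$. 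Thus all of levels $0,1,2$ is determined, and level $3$ follows from the given $\widetilde\mu_{2,1}$ and the chain observation.

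For the inductive step, assume $N\ge 4$ and that every moment of level $\le N-1$ has been determined. Applying \eqref{4-term} with $(i,j)=(1,N-3)$ expresses $\widetilde\mu_{2,N-2}$ --- a level-$N$ moment --- in terms of $\widetilde\mu_{2,N-3}$, $\widetilde\mu_{1,N-3}$ and $\widetilde\mu_{0,N-3}$, which sit in levels $N-1$, $N-2$ and $N-3$ and are therefore known; the chain observation then fills in the remaining entries of level $N$. This closes the induction, and every $\widetilde\mu_{i,j}$ is thereby expressed through $\widetilde\mu_{2,2}$, $\widetilde\mu_{1,2}$, $\widetilde\mu_{2,1}$ and the three recursion-like formulas.

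The step I expect to demand the most care is the seeding in the second part: one must check that the ``downward'' solves for $\widetilde\mu_{1,1}$, $\widetilde\mu_{0,1}$, $\widetilde\mu_{1,0}$, $\widetilde\mu_{0,0}$ are genuinely non-circular, and that every quantity one divides by --- namely $2\sqrt{\alpha+1}$ and $2(\alpha+1)(\sqrt{\alpha+1}\pm 1)$ --- is nonzero. Both points rest on the standing hypothesis $\alpha>0$, equivalently $\sqrt{\alpha+1}>1$, which is also precisely what places $r,s$ outside $[0,\infty)$ and so guarantees convergence of the defining integrals. The first part of the proof is routine once the incomplete-Gamma integral identity above is recalled.
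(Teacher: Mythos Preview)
Your argument is correct and, for the three closed-form initial moments, essentially identical to the paper's: both rely on $L_2^{\alpha-1}(-x)=\tfrac12(x-r)(x-s)$ to cancel the denominator of the weight, and both reduce $\widetilde\mu_{1,2}$, $\widetilde\mu_{2,1}$ to the same incomplete-Gamma integral (the paper cites it from \cite{Liaw-Osborn}, you derive it via the Laplace representation of $1/(x+a)$).

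The determinacy half is where you diverge in organization. The paper argues pictorially: fill the $3\times 3$ upper-left block of the moment array --- computing $\widetilde\mu_{1,1}$ from \eqref{3-term}, then $\widetilde\mu_{0,1}$ and $\widetilde\mu_{1,0}$ from the two four-term relations, then $\widetilde\mu_{0,0}$ from \eqref{3-term}, and finally $\widetilde\mu_{0,2},\widetilde\mu_{2,0}$ from the four-term relations --- after which it declares ``it is clear'' that the trio of recursions generates the rest of the array. Your induction on the total degree $N=i+j$, built on the chain observation that \eqref{3-term} links consecutive level-$N$ moments through a single level-$(N{-}1)$ moment, makes that last step explicit: one application of \eqref{4-term} at $(i,j)=(1,N-3)$ seeds level $N$, and the chain propagates. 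The seeding steps you perform at low levels coincide with the paper's, including the nonvanishing checks on $2\sqrt{\alpha+1}$ and $2(\alpha+1)(\sqrt{\alpha+1}\pm1)$, which the paper leaves implicit. So the two proofs use the same ingredients; yours trades the paper's visual array-filling for a tighter level-by-level induction and in the process shows that \eqref{4-term_b} is needed only once (for $\widetilde\mu_{1,0}$), whereas the paper invokes it at every stage symmetrically with \eqref{4-term}.
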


\begin{proof}

Recall that $L_2^{\alpha-1}(-x) = \frac{1}{2}(x-r)(x-s)$.  The first adjusted moment given above, $\widetilde{\mu}_{2,2}$, is the simplest to compute because of the complete cancellation of the linear factors $(x-r)$ and $(x-s)$ that occurs.  We have
\begin{align*}
  \widetilde{\mu}_{2,2} &= \int_0^\infty (x-r)^2 (x-s)^2 \, W^\alpha (x) \, dx = \int_0^\infty (x-r)^2 (x-s)^2 \, \frac{x^{\alpha}e^{-x}}{(L_2^{\alpha-1}(-x))^2} \, dx \\
  &= 4\int_0^\infty x^{\alpha}e^{-x} \, dx = 4\Gamma(1+\alpha).
\end{align*}

For $\widetilde{\mu}_{1,2}$ and $\widetilde{\mu}_{2,1}$, we rely on the result of Theorem 4.1 in \cite{Liaw-Osborn}, where it is established that
\begin{align*}
  \int_0^\infty \frac{e^{-x} x^\beta}{(x+\alpha)} dx 
  &= e^\alpha\, E_{1+\beta}(\alpha)\, \Gamma(1+\beta),\quad
  \alpha>0,\beta >-1, \quad \text{where}
\end{align*}

\[ E_a(x) = \int_1^\infty e^{-x t} t^{-a} dt = x^{a-1} \Gamma(1-a,x), \quad x>0. \]

This result allows us to compute both
\begin{align*}
  \widetilde{\mu}_{1,2} &= \int_0^\infty (x-r) (x-s)^2 \, W^\alpha (x) \, dx = 4\int_0^\infty \frac{x^{\alpha}e^{-x}}{(x-r)} \, dx \\
  &= 4e^{-r}E_{1+\alpha}(-r)\, \Gamma(1+\alpha) = 4e^{-r} (-r)^\alpha \Gamma(1+\alpha) \Gamma(-\alpha,-r), \\
  &\hspace{-2.74 cm} \text{and similarly, by symmetry,} \\
  \widetilde{\mu}_{2,1} &= 4e^{-s} (-s)^\alpha \Gamma(1+\alpha) \Gamma(-\alpha,-s).
\end{align*}

To see that these three adjusted moments are sufficient to start a recursion-like process that can be used to calculate all other moments, consider the four diagrams in Figures \ref{fig:fig1} and \ref{fig:fig2}.

Figure \ref{fig:fig1}(a) represents the behavior of the three-term recursion-like formula given in \eqref{3-term}.  Namely, if we know the values of any two of the three adjusted moments marked by the symbol *, the formula allows us to compute the third moment.  (In this and the subsequent figures, the first subscript of the adjusted moments is indicated going down the left side of the figure, while the second subscript is given along the top.)

Figure \ref{fig:fig1}(b) represents the behavior of the four-term recursion-like formula given in \eqref{4-term_b}.  That is, if we know the values of any three of the four adjusted moments marked by the symbol *, the formula allows us to compute the fourth moment.

And Figure \ref{fig:fig1}(c) represents the behavior of the four-term recursion-like formula given in \ref{4-term}.  As before, if we know the values of any three of the four adjusted moments marked by the symbol *, the formula allows us to compute the fourth moment.

Finally, Figure \ref{fig:fig2} demonstrates the first few steps in starting to compute all entries of the two dimensional array of adjusted moments.  The cells marked ``A" represent the 3 adjusted moments $\widetilde{\mu}_{2,2}, \, \widetilde{\mu}_{1,2},$ and $\widetilde{\mu}_{2,1}$ calculated at the start of this theorem.  Next, $\widetilde{\mu}_{1,1}$, marked with a ``B", can be computed using the three-term formula represented in Figure \ref{fig:fig1}(a), along with the known values of $\widetilde{\mu}_{1,2}$ and $\widetilde{\mu}_{2,1}$.

Then we find one of the two adjusted moments marked with a ``C" by using the four-term formula displayed in Figure \ref{fig:fig1}(b), and we get the other moment marked ``C" from the other four-term formula, as demonstrated in Figure \ref{fig:fig1}(c).  To compute the adjusted moment marked ``D", we once again rely on the three-term formula, using the now-known values of $\widetilde{\mu}_{0,1},$ and $\widetilde{\mu}_{1,0}$.

Finally, the values of $\widetilde{\mu}_{0,2},$ and $\widetilde{\mu}_{2,0}$ are computed making use of the two four-term formulas---one for each.  Once the 3-by-3 sub-array at the upper left corner of the infinite array of adjusted moments has been filled out, it is clear that the remaining entries can all be computed using the trio of recursion-like formulas---\eqref{3-term}, \eqref{4-term}, and \eqref{4-term_b}.


\begin{figure}
	\footnotesize
	\begin{minipage} {1.00\textwidth}
		\centering
	\begin{tabular}{c|C{.3 in}|C{.3 in}|C{.4 in}c|C{.3 in}|C{.3 in}|C{.3 in}|C{.4 in}c|C{.3 in}|C{.3 in}|}
		  \multicolumn{1}{r}{}
		& \multicolumn{1}{c}{$j$}
		& \multicolumn{1}{c}{$j+1$}
		& \multicolumn{1}{r}{}
		& \multicolumn{1}{r}{}
		& \multicolumn{1}{c}{$j-1$}
		& \multicolumn{1}{c}{$j$} 
		& \multicolumn{1}{c}{$j+1$}
		& \multicolumn{1}{r}{}
		& \multicolumn{1}{r}{}
		& \multicolumn{1}{c}{$j$}
		& \multicolumn{1}{c}{$j+1$} \\
   		  \cline{2-3} \cline{6-8} \cline{11-12}
		$i-1$ & & & & $i-1$ & & & & & $i-1$ & * & \\
		\cline{2-3} \cline{6-8} \cline{11-12}
		$i$ & * & * & & $i$ & * & * & * & & $i$ & * & \\
		\cline{2-3} \cline{6-8} \cline{11-12}
        $i+1$ & * & & & $i+1$ & & & * & & $i+1$ & * & * \\
	    \cline{2-3} \cline{6-8} \cline{11-12}
	\end{tabular}
	\caption{}\label{fig:fig1}
	\hspace{1.0 cm} (a) \hspace{4.55 cm} (b) \hspace{4.65 cm} (c)
	\end{minipage}
\end{figure}

\begin{figure}
\caption{Filling out array of adjusted moments}
\label{fig:fig2}
	\begin{tabular}{ C{.3 in}|C{.3 in}|C{.3 in}|C{.3 in}|C{.3 in}| }
		\multicolumn{1}{r}{$i \bigg \backslash j$}
		&  \multicolumn{1}{c}{$0$}
		& \multicolumn{1}{c}{$1$} 
		& \multicolumn{1}{c}{$2$} 
		& \multicolumn{1}{c}{$\cdots$}\\
		\cline{2-5}
		$0$& D & C& $\cdots$ & $\cdots$ \\
		\cline{2-5}
		$1$& C & B & A & $\cdots$ \\
		\cline{2-5}
		$2$ & $\vdots$ & A & A &$\cdots$ \\
		\cline{2-5}
		$\vdots$ & $\vdots$  & $\vdots$  &$\vdots$  & $\ddots$\\
		\cline{2-5}
	\end{tabular}
\end{figure}


\end{proof}

\end{document}